\pdfoutput=1
\documentclass[notitlepage,11pt]{article}%
\usepackage{amsmath}
\usepackage{amsfonts}
\usepackage{amssymb}
\usepackage{graphicx}
\usepackage{multicol}
\usepackage{breakcites}%
\setcounter{MaxMatrixCols}{30}
%BeginMSIPreambleData
\providecommand{\U}[1]{\protect\rule{.1in}{.1in}}
%EndMSIPreambleData
\newtheorem{theorem}{Theorem}

\newtheorem{definition}[theorem]{Definition}

\newtheorem{problem}[theorem]{Problem}
\newtheorem{proposition}[theorem]{Proposition}
\newtheorem{remark}[theorem]{Remark}

\newtheorem{assumption}[theorem]{Assumption}
\newenvironment{proof}[1][Proof]{\noindent\textbf{#1.} }{\ \rule{0.5em}{0.5em}}

\textwidth 165mm
\textheight 230.7mm
\oddsidemargin=0mm
\evensidemargin=0mm
\addtolength{\topmargin}{-1in}
\begin{document}

\title{Solution of parabolic free boundary problems using transmuted heat polynomials}
\author{Igor V. Kravchenko\\{\footnotesize {Instituto Universit\'{a}rio de Lisboa (ISCTE-IUL),
Edif\'{\i}cio II, Av. Prof. An\'{\i}bal Bettencourt,}}\\{\footnotesize {1600-189 Lisboa, Portugal,} \texttt{ivkoh@iscte.pt}, }
\and Vladislav V. Kravchenko\\{\footnotesize {Departamento de Matem\'{a}ticas, CINVESTAV del IPN, Unidad
Quer\'{e}taro, Libramiento Norponiente No. 2000,}}\\{\footnotesize {Fracc. Real de Juriquilla,
Quer\'{e}taro, Qro. C.P. 76230 M\'{e}xico,} \texttt{vkravchenko@math.cinvestav.edu.mx}, }
\and Sergii M. Torba\\{\footnotesize {Departamento de Matem\'{a}ticas, CINVESTAV del IPN, Unidad
Quer\'{e}taro, Libramiento Norponiente No. 2000,}}\\{\footnotesize {Fracc. Real de Juriquilla,
Quer\'{e}taro, Qro. C.P. 76230 M\'{e}xico}, \texttt{storba@math.cinvestav.edu.mx}}}
\maketitle

\begin{abstract}
A numerical method for free boundary problems for the equation
\begin{equation}
u_{xx}-q(  x)  u=u_{t} \label{Abs1}%
\end{equation}
is proposed. The method is based on recent results from transmutation
operators theory allowing one to construct efficiently a complete system of
solutions for equation (\ref{Abs1}) generalizing the system of heat
polynomials. The corresponding implementation algorithm is presented.

\end{abstract}

\section{Introduction}

Free boundary problems (FBPs) for parabolic equations are of considerable
interest in physics (e.g., the Stefan problem) and in financial mathematics
(e.g., the problem of pricing of an American option). One of the relatively
simple and practical methods proposed for solving FBPs involving the heat
equation is the heat polynomials method (see \cite{colton1976solution},
\cite{Reemtsen1982}, \cite{colton1984numerical}, \cite{ReemtsenKirsch1984},
\cite{sarsengeldin2014analytical}, \cite{kharin2016analytical}) based on the
fact that the system of the heat polynomials represents a complete family of
solutions of the heat equation. In the book \cite{colton1976solution} D.
Colton proposed to extend this method onto parabolic equations with variable
coefficients by constructing an appropriate transmutation operator and
obtaining with its aid the corresponding transmuted heat polynomials. However,
the construction of the transmutation operator is a difficult problem itself.
In the present work we show that the transmuted heat polynomials required for
Colton's approach can be constructed without knowledge of the transmutation
operator by a simple and robust recursive integration procedure. To this aim a
recent result from \cite{CamposKravchenkoTorba_2011} concerning a mapping
property of the transmutation operators is used.

This makes possible to extend the heat polynomials method onto equations of
the form
\begin{equation}
u_{xx}-q(x)u=u_{t}. \label{Intro Eq}%
\end{equation}
We mention that linear parabolic equations of a more general form with coefficients depending on one variable reduce to \eqref{Intro Eq} (see, e.g., \cite[Chap.\ 2]{colton1976solution} or \cite{miyazawa1989theory}).

Thus, we propose a numerical method for approximate solution of a class of
FBPs involving (\ref{Intro Eq}). This method of transmuted heat polynomials
will be designated by THP. The main aim of this paper is to explain it in
detail and to propose a simple to implement algorithm for its application.

The subject of FBPs appears in many different fields and applications, as
such, presents a large variety of formulations and still open questions. We
will not be focusing on the existence and uniqueness of the solution in this
paper assuming that it exists in the classical sense.

The paper is structured as follows. In Section \ref{Sec_FP} we state the FBP.
In Section \ref{Sec_Theory} we define transmutation operators and present some
of their properties motivating the development of the THP method. In Section
\ref{Sec_Al} we construct the conceptual algorithm for the implementation of
the THP method for solving FBPs. In Section \ref{Sec_NumEx} we consider an
example with an exact solution, with its aid we illustrate the performance of
the method. In Section \ref{Sec_DiffFBP} we discuss the possibilities of
generalization of our construction and its application to more general FBPs.

\section{Statement of the problem\label{Sec_FP}}

Consider the differential expression
\[
\mathbf{A}=\frac{\partial^{2}}{\partial x^{2}}-q(x)
\]
with $q\in C[0,L]  $, $q:[  0,L]  \rightarrow
\mathbb{C}$.

\begin{figure}[ptb]
\centering
\includegraphics[bb = 0 0 190 89,
height=4.58cm,
width=9.73cm
]{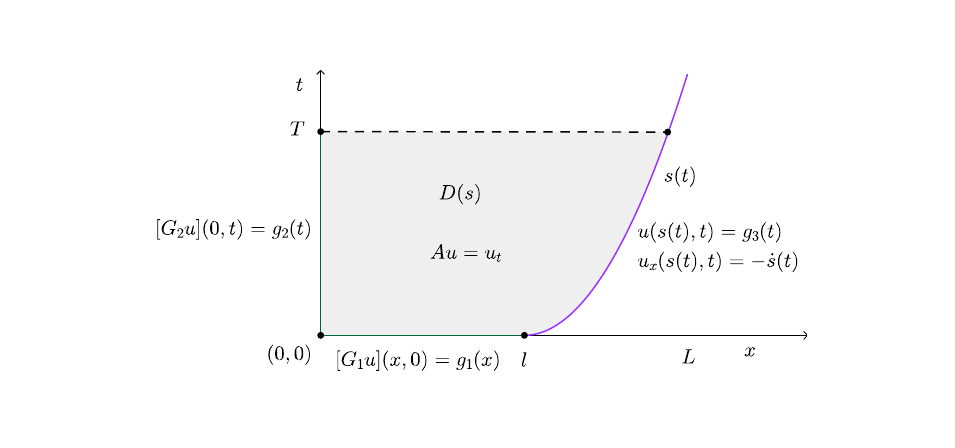}
\caption{Illustration of Problem \ref{Problem_FBP} statement}
\label{fig_St_Prob}
\end{figure}

Every $s\in C^{1}[  0,T]  $, such that $0<s(x)\le L$ for all $x\in [0,T)$ and $s(0)  =l$,
defines a domain%
\begin{equation}
D(  s)  =\left\{  (x,t)\in\mathbb{R}^{2}:0<x<s(t),0<t<T\right\}  ,
\label{D(s)}%
\end{equation}
see Figure \ref{fig_St_Prob}. Consider the first order linear differential
operators $\mathbf{G}_{1}=\gamma_{11}(  x)  +\gamma_{12}(
x)  \frac{\partial}{\partial x}$ and $\mathbf{G}_{2}=\gamma_{21}(
t)  +\gamma_{22}(  t)  \frac{\partial}{\partial x}$ , where
$\gamma_{ij}$ are some given continuous functions.

\begin{problem}
\label{Problem_FBP}Find functions $(  u,s)  $ such that
\begin{description}
\item[(i)] $s\in C^{1}[  0,T]$ and such that $0<s(x)\le L$ for all $x\in [0,T)$ and $s(0)  =l$,
\item[(ii)] $u\in C^{2,1}\bigl(  D(  s)  \bigr)$,
\item[(iii)] the following equation is satisfied on $D(  s)  $
\begin{equation}
\mathbf{A}u(  x,t)  =u_{t}(  x,t)  ,\qquad(  x,t)  \in D(  s)  , \label{eq_FBP1}%
\end{equation}
\item[(iv)] and the following boundary conditions are satisfied
\begin{align}
\left[\mathbf{G}_{1}u\right]  (  x,0)   &  =g_{1}(
x)  ,\qquad x\in(  0,l),\qquad l\leq
L,\label{eq_FBP_b1}\\
\left[  \mathbf{G}_{2}u\right]  (  0,t)   &  =g_{2}(
t)  ,\qquad t\in(  0,T)  ,\label{eq_FBP_b2}\\
u\bigl(  s(  t)  ,t\bigr)   &  =g_{3}(  t)  ,\qquad t\in(  0,T)  ,\label{eq_FBP_b3}\\
u_{x}\bigl(  s(  t)  ,t\bigr)   &  =-\dot{s}(  t)
,\qquad t\in(  0,T),   \label{eq_FBP_b4}%
\end{align}
here the dot over the function $s$ means the derivative with respect to the variable $t$. The last condition is usually known as the equation of heat balance or as the Stefan condition.
\end{description}
\end{problem}

This problem is broadly studied in literature (see, e.g., \cite{Friedman1964}, \cite{rubinshteuin1971stefan}, \cite{crank1984free},
\cite{meirmanov1992stefan}, \cite{FASANO1979247} and
\cite{tarzia2000bibliography} for additional bibliography). In particular, the
classical one dimensional one phase Stefan problem is a special case of
Problem \ref{Problem_FBP}. Since the subject of this paper is the approximate numerical method for solution of Problem \ref{Problem_FBP}, we make the following assumption.

\begin{assumption}
There exists a unique solution to Problem \ref{Problem_FBP}.
\end{assumption}

A relevant example of an existence and uniqueness result is given in \cite[Theorem 1]{FASANO1979247}, see Remark \ref{Remark Existance} below.

\section{Transmuted heat polynomials\label{Sec_Theory}}

\subsection{Heat polynomials}

The heat polynomials are defined for $n\in\mathbb{N}_0$ as (see, e.g.,
\cite{rosenbloom1959expansions} and \cite{widder1962})
\[
h_{n}(  x,t)  =\sum_{k=0}^{\left[  n/2\right]  }
c_{k}^{n}x^{n-2k}t^{k},\qquad \text{where }c_{k}^{n}=\frac{n!}{(n-2k)!k!}
\]
and $\left[  \cdot\right]  $ denotes the entire part of the number.
The first five heat polynomials are%
\begin{align*}
h_{0}(  x,t)   &  =1,\qquad h_{1}(  x,t)
=x,\qquad h_{2}(  x,t)  =x^{2}+2t,\\
h_{3}(  x,t)   &  =x^{3}+6xt,\qquad h_{4}(  x,t)
=x^{4}+12x^{2}t+12t^{2}.
\end{align*}

The set of the heat polynomials $\left\{  h_{n}\right\}  _{n\in\mathbb{N}_0}$ represents a complete system of solutions for the
heat equation%
\begin{equation*}
u_{xx}=u_{t} %\label{Eq_Heat}%
\end{equation*}
on any domain $D(s)$ defined by (\ref{D(s)}) (see
\cite{COLTON1977}).

\subsection{Formal powers and the transmutation operator}

Let $f$ be a nonvanishing (in general, complex valued) solution of the equation
\begin{equation}
\left(  \frac{d^{2}}{dx^{2}}-q(  x)  \right)  f(x)=0,\qquad
x\in(0,L), \label{Eq_tr1}%
\end{equation}
such that
\begin{equation}
f(  0)  =1. \label{Eq_tr2}%
\end{equation}
The existence of such solution\footnote{In fact the only reason for the
requirement of the absence of zeros of the function $f$ is to make sure that
the auxiliary functions (\ref{eq_phik}) be well defined. As was shown in
\cite{KravchenkoTorba2014Mod} this can be done even without such requirement,
but corresponding formulas are somewhat more complicated.} for any
complex valued $q\in C[0,L]$ was proved in \cite{KravchenkoPorter10} (see also
\cite{camporesi2011generalization}).

Consider two sequences of recursive integrals (see \cite{Kravchenko09},
\cite{KravchenkoMorelosTorba14}, \cite{KravchenkoPorter10})
\[
X^{(0)}(x)\equiv1,\qquad X^{(n)}(x)=n\int_{0}^{x}X^{(n-1)}(s)\left(
f^{2}(s)\right)  ^{(-1)^{n}}\,\mathrm{d}s,\qquad n=1,2,\ldots
\]
and
\[
\widetilde{X}^{(0)}\equiv1,\qquad\widetilde{X}^{(n)}(x)=n\int_{0}%
^{x}\widetilde{X}^{(n-1)}(s)\left(  f^{2}(s)\right)  ^{(-1)^{n-1}}%
\,\mathrm{d}s,\qquad n=1,2,\ldots.
\]
\begin{definition}
\label{Def_FormalPowers}The family of functions $\left\{  \varphi_{n}\right\}
_{n=0}^{\infty}$ constructed according to the rule%
\begin{equation}
\varphi_{n}(  x)  =
\begin{cases}
f(  x)  X^{(  n)  }(  x)  , &   n\text{
odd},\\
f(  x)  \tilde{X}^{(  n)  }(  x)  , &
n\text{ even},
\end{cases}
  \label{eq_phik}%
\end{equation}
is called the system of \textbf{formal powers} associated with $f$.
\end{definition}

The formal powers arise in the spectral parameter power series
(SPPS)\ representation for solutions of the Sturm-Liouville equation, see
\cite{kravchenko2008representation}, \cite{KravchenkoPorter10},
\cite{khmelnytskaya2015eigenvalue}, \cite{KravchenkoMorelosTorba14}.

The following result from \cite{marchenko1952some} (see also
\cite{kravchenko2015representation} and \cite{kravchenko2015analytic} for
additional details) and from \cite{CamposKravchenkoTorba_2011} guarantees the existence of a transmutation operator associated with $f$ and shows its connection with the system of formal powers.

\begin{theorem}
\label{Teor_TrOp}Let $q\in C[  0,L]  $. Then there exists a unique
complex valued function $K(  x,y)  \in C^{1}\bigl(  [
0,L]  \times[  -L,L]  \bigr)  $ such that the Volterra
integral operator%
\[
\mathbf{T}u(  x)  =u(  x)  +\int_{-x}^{x}K(
x,y)  u(  y) \, dy
\]
defined on $C[  0,L]  $ satisfies the equality
\[
\mathbf{AT}[  u]  =\mathbf{T}\left[  \frac{d^{2}u}{dx^{2}}\right]
\]
for any $u\in C^{2}[  0,L]  $ and
\[
\mathbf{T}\left[  1\right]  =f.
\]
\end{theorem}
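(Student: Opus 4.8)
The plan is to convert the intertwining identity $\mathbf{A}\mathbf{T}[u]=\mathbf{T}[u'']$ into a characteristic (Goursat) boundary value problem for the kernel $K$ and then to solve that problem by successive approximations. First I would insert $v=\mathbf{T}u$ into $\mathbf{A}v=v_{xx}-qv$, differentiating the integral $\int_{-x}^{x}K(x,y)u(y)\,dy$ twice by the Leibniz rule, and separately compute $\mathbf{T}[u'']=u''+\int_{-x}^{x}K(x,y)u''(y)\,dy$, integrating by parts twice to move the derivatives off $u$. Equating the two expressions and matching, for arbitrary $u\in C^{2}[0,L]$, the boundary terms produced at $y=\pm x$ as well as the integrands, I expect that $\mathbf{A}\mathbf{T}=\mathbf{T}\frac{d^{2}}{dx^{2}}$ holds if and only if $K$ satisfies the hyperbolic equation
\[
K_{xx}(x,y)-q(x)K(x,y)=K_{yy}(x,y)
\]
on the characteristic triangle $\{(x,y):|y|\le x\le L\}$ together with the two conditions $\frac{d}{dx}K(x,x)=\frac{q(x)}{2}$ and $\frac{d}{dx}K(x,-x)=0$. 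Applying the intertwining identity to $u\equiv1$ shows that $\mathbf{T}[1]$ solves $\mathbf{A}w=0$ with $w(0)=1$ and $w'(0)=2K(0,0)$; the single remaining free constant $K(0,0)$ is then pinned down by requiring $w'(0)=f'(0)$, i.e.\ $K(0,0)=f'(0)/2$, so that uniqueness for the initial value problem of $\mathbf{A}w=0$ forces $\mathbf{T}[1]=f$.

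Next I would pass to the characteristic coordinates $\xi=x+y$, $\eta=x-y$, under which the lines $y=\pm x$ become the axes $\eta=0$ and $\xi=0$ and the equation takes the canonical form $H_{\xi\eta}=\tfrac14\,q\!\left(\tfrac{\xi+\eta}{2}\right)H$, where $H(\xi,\eta)=K(x,y)$. Integrating this identity successively in $\eta$ and in $\xi$ and substituting the characteristic data converts the Goursat problem into the equivalent Volterra integral equation
\[
H(\xi,\eta)=\frac{f'(0)}{2}+\frac12\int_{0}^{\xi/2}q(s)\,ds+\frac14\int_{0}^{\xi}\!\!\int_{0}^{\eta}q\!\left(\tfrac{\xi'+\eta'}{2}\right)H(\xi',\eta')\,d\eta'\,d\xi'.
\]
I would then solve this by the iteration $H_{0}:=\frac{f'(0)}{2}+\frac12\int_{0}^{\xi/2}q$ and $H_{n+1}:=H_{0}+(\text{integral operator})H_{n}$, and bound the successive differences by factorial-type estimates of the form $C\,(M\xi\eta)^{n}/(n!)^{2}$ coming from the double Volterra integration, which gives uniform convergence on the compact triangle and hence a continuous solution $H$; transforming back yields a continuous $K$.

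For the regularity claim I would read it off the integral equation: since $q\in C[0,L]$ the free term $H_{0}$ is $C^{1}$ and the integral operator sends bounded continuous functions into $C^{1}$ functions, so the uniform limit $H$ is $C^{1}$, whence $K\in C^{1}$ on the triangle $\{|y|\le x\le L\}$, which is precisely the region entering $\mathbf{T}$ (and which one extends to the full rectangle $[0,L]\times[-L,L]$). Uniqueness is then immediate from the Volterra structure: the difference of two admissible kernels satisfies the homogeneous version of the same integral equation, and the factorial estimate forces it to vanish identically. The main obstacle is the bookkeeping in the first step — correctly accounting for every boundary contribution produced by differentiating under the integral sign and by the two integrations by parts at the moving endpoints $y=\pm x$ — because it is exactly the cancellation of the $u'(\pm x)$ terms and the matching of the $u(\pm x)$ terms that produce the two characteristic conditions and fix the Goursat data; once the kernel is known to be continuous, the convergence and regularity steps are routine.
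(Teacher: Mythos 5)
The paper itself does not prove Theorem \ref{Teor_TrOp}: it is imported from Marchenko's 1952 work (with the normalization $\mathbf{T}[1]=f$; the related mapping property is Theorem \ref{ThFormPowers}, quoted from Campos--Kravchenko--Torba). So your proposal can only be compared with the classical argument in those references, and that is precisely the route you follow. Structurally your sketch is the standard one, and the computations you indicate do check out: equating coefficients of $u(\pm x)$ yields the characteristic conditions $\frac{d}{dx}K(x,x)=q(x)/2$ and $\frac{d}{dx}K(x,-x)=0$ (note the $u'(\pm x)$ boundary terms cancel identically, independently of $K$, so they do not actually impose conditions); $\mathbf{T}[1]=f$ pins down $K(0,0)=f'(0)/2$, consistent with the paper's remark that $(\mathbf{T}u)'(0)=u'(0)+f'(0)u(0)$; the Goursat problem passes in characteristic coordinates to exactly the Volterra integral equation you wrote; and the successive approximations converge with the $(M\xi\eta)^{n}/(n!)^{2}$ bounds.

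There is, however, one genuine gap: a smoothness mismatch between your step 1 and steps 2--3. The equivalence of the intertwining relation with the Goursat problem is derived by differentiating twice under the integral sign and integrating by parts twice, which presupposes $K\in C^{2}$. But under the theorem's actual hypothesis $q\in C[0,L]$ the kernel you construct is only $C^{1}$ --- indeed, since $\frac{d}{dx}K(x,x)=q(x)/2$, the kernel cannot be $C^{2}$ unless $q\in C^{1}$ --- and the wave equation holds only in the sense that the combination $K_{xx}-K_{yy}=4H_{\xi\eta}$ exists and is continuous, while $K_{xx}$ and $K_{yy}$ separately need not exist. Consequently you cannot invoke the step-1 equivalence either to conclude $\mathbf{A}\mathbf{T}u=\mathbf{T}[u'']$ from the integral equation (existence), or to conclude that an arbitrary admissible $C^{1}$ kernel satisfies that integral equation (uniqueness). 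Two standard repairs: (i) verify the intertwining relation directly from the integral equation, working in characteristic coordinates and moving $x$-derivatives onto $u$ by changes of variable, so that only the mixed derivative $H_{\xi\eta}$ ever appears; or (ii) prove the theorem first for $q\in C^{1}$, where $K\in C^{2}$ and your argument is classical, then approximate a continuous $q$ by $C^{1}$ functions and pass to the limit in the integral equation --- the same device this paper uses in the proof of Proposition \ref{Prop Complete system}. For uniqueness at $C^{1}$ regularity a cleaner argument is available: any admissible $\mathbf{T}$ maps $\cos\lambda x$ (resp.\ $\sin\lambda x$) to the solution of $\mathbf{A}y=-\lambda^{2}y$ with $y(0)=1$, $y'(0)=f'(0)$ (resp.\ $y(0)=0$, $y'(0)=\lambda$), so any two admissible operators agree on all trigonometric functions, and density forces the kernels to coincide on the triangle $\{|y|\le x\le L\}$ --- which is also the only set on which uniqueness can honestly be asserted, since values of $K$ on the rest of the rectangle $[0,L]\times[-L,L]$ never enter $\mathbf{T}$.
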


Note that if $v=\mathbf{T}u$ then $v(0)=u(0)$ and $v^{\prime}(0)=u^{\prime
}(0)+f^{\prime}(0)u(0)$.

\begin{theorem}[\cite{CamposKravchenkoTorba_2011}]\label{ThFormPowers}
\[
\mathbf{T}\left[  x^{n}\right]  =\varphi_{n}(  x)  \qquad \text{for
any }n\in\mathbb{N}_0.
\]
\end{theorem}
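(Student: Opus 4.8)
The plan is to argue by induction on $n$, comparing $v_n := \mathbf{T}[x^n]$ with the formal power $\varphi_n$ by showing that both satisfy the same second order initial value problem and then invoking uniqueness. The two ingredients I would use from the preceding material are the intertwining relation $\mathbf{AT}[u]=\mathbf{T}[u'']$ together with $\mathbf{T}[1]=f$, and the observation recorded after Theorem \ref{Teor_TrOp} that $v=\mathbf{T}u$ implies $v(0)=u(0)$ and $v'(0)=u'(0)+f'(0)u(0)$.

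First I would settle the base cases. For $n=0$ the identity $\mathbf{T}[1]=f=\varphi_0$ is immediate since $\varphi_0=f\widetilde{X}^{(0)}=f$. For $n=1$, applying the intertwining relation to $u=x$ gives $\mathbf{A}v_1=\mathbf{T}[0]=0$, while the boundary observation yields $v_1(0)=0$ and $v_1'(0)=1$; one checks directly that $\varphi_1=fX^{(1)}=f\int_0^x f^{-2}$ solves the same homogeneous problem, so $v_1=\varphi_1$ by uniqueness. For the inductive step, assuming $v_{n-2}=\varphi_{n-2}$, applying $\mathbf{AT}[u]=\mathbf{T}[u'']$ to $u=x^n$ gives $\mathbf{A}v_n=n(n-1)v_{n-2}=n(n-1)\varphi_{n-2}$, and since $x^n$ vanishes to second order at $0$ for $n\geq 2$, the boundary observation gives $v_n(0)=v_n'(0)=0$. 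Thus $v_n$ is the unique solution of $\mathbf{A}w=n(n-1)\varphi_{n-2}$ with zero Cauchy data, and it suffices to prove that $\varphi_n$ solves precisely this problem.

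The heart of the argument is the recursion $\mathbf{A}\varphi_n=n(n-1)\varphi_{n-2}$. Writing $\varphi_n=fY_n$ with $Y_n\in\{X^{(n)},\widetilde{X}^{(n)}\}$ and using $f''=qf$, I would first establish
\[
\mathbf{A}(fY_n)=(fY_n)''-qfY_n=(f''-qf)Y_n+2f'Y_n'+fY_n''=\frac{1}{f}\bigl(f^2Y_n'\bigr)'.
\]
Then I would differentiate the defining recursive integrals. For $n$ even, $Y_n=\widetilde{X}^{(n)}$ satisfies $Y_n'=n\widetilde{X}^{(n-1)}f^{-2}$, hence $f^2Y_n'=n\widetilde{X}^{(n-1)}$ and $(f^2Y_n')'=n(\widetilde{X}^{(n-1)})'=n(n-1)\widetilde{X}^{(n-2)}f^2$; dividing by $f$ gives $n(n-1)f\widetilde{X}^{(n-2)}=n(n-1)\varphi_{n-2}$. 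The odd case is identical with $X$ in place of $\widetilde{X}$. The matching Cauchy data are easy: since $X^{(k)}(0)=\widetilde{X}^{(k)}(0)=0$ for $k\geq 1$, one gets $\varphi_n(0)=f(0)Y_n(0)=0$, and differentiating $\varphi_n=fY_n$ at the origin gives $\varphi_n'(0)=Y_n'(0)=0$ for $n\geq 2$. With the recursion and the zero Cauchy data in hand, uniqueness of the initial value problem for $\mathbf{A}w=n(n-1)\varphi_{n-2}$ forces $\varphi_n=v_n$ and closes the induction.

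I expect the main obstacle to be the sign bookkeeping in the two interleaved sequences: the exponents $(-1)^n$ and $(-1)^{n-1}$ in the definitions of $X^{(n)}$ and $\widetilde{X}^{(n)}$ must be tracked carefully so that each differentiation converts an $f^{-2}$ factor into an $f^{2}$ factor, and vice versa, and the telescoping through $(f^2Y_n')'$ produces exactly $\varphi_{n-2}$ rather than some unrelated recursive integral. Once the two cases are organized correctly, the remainder of the argument is a routine application of the uniqueness theorem for linear Cauchy problems.
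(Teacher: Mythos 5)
Your proof is correct; note, though, that this paper contains no proof of the statement to compare against --- Theorem \ref{ThFormPowers} is simply quoted from \cite{CamposKravchenkoTorba_2011}. Checking your argument: the identity $\mathbf{A}(fY)=\frac{1}{f}\bigl(f^{2}Y'\bigr)'$ holds because $f''=qf$; the sign bookkeeping does work out, each differentiation of a recursive integral turning $f^{\mp 2}$ into $f^{\pm 2}$, so that two steps telescope to $\mathbf{A}\varphi_{n}=n(n-1)\varphi_{n-2}$ within the same sequence ($X$ for odd $n$, $\widetilde{X}$ for even $n$); the Cauchy data of $\mathbf{T}[x^{n}]$ follow from the remark after Theorem \ref{Teor_TrOp} (and the fact that $\mathbf{T}[x^{n}]$ is twice differentiable, so that $\mathbf{A}$ applies classically, is implicit in the statement of that theorem); and uniqueness of the Cauchy problem $w''-qw=g$, $w(0)=w'(0)=0$ is valid for complex-valued continuous $q$ (rewrite as a first-order system and apply Gr\"onwall). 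Your route is genuinely different from the proof in the cited reference, which works with the spectral parameter rather than term by term: there the operator $\mathbf{T}$ is applied to $e^{i\omega x}$ (equivalently to $\cos\omega x$ and $\sin\omega x$), the image is identified through its initial values as the solution of $\mathbf{A}u=-\omega^{2}u$ given by the spectral parameter power series whose coefficients are the formal powers $\varphi_{k}$, and coefficients of like powers of $\omega$ are compared. That argument yields all $n$ simultaneously but requires the SPPS representation theorem and a convergence/analyticity step for the Volterra operator; yours is elementary, inductive, and entirely self-contained given only the facts actually stated in this paper (the intertwining relation, $\mathbf{T}[1]=f$, and ODE uniqueness), which makes it a useful addition, since the paper otherwise leaves the reader to consult the external reference.
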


\subsection{Construction of the transmuted heat polynomials}

Denote $H_{n}=\mathbf{T}\left[  h_{n}\right]  $. Thus, $H_{n}(x,t)=h_{n}%
(x,t)+\int_{-x}^{x}K(  x,y)  h_{n}(  y,t)\,  dy$. The
functions $H_{n}$ are called the \textbf{transmuted heat polynomials}  \cite{KJT2017}.

We have $\left(  \mathbf{A}-\frac{\partial}{\partial t}\right)  H_{n}%
=\mathbf{AT}h_{n}-\frac{\partial}{\partial t}\mathbf{T}h_{n}=\mathbf{T}\left(
\frac{\partial^{2}}{\partial x^{2}}-\frac{\partial}{\partial t}\right)
h_{n}=0$ and hence every $H_{n}$ is a solution of the equation
\begin{equation}
\mathbf{A}u=u_{t}. \label{eq_heatEqOnly}%
\end{equation}

The set $\left\{  H_{n}\right\}  _{n\in\mathbb{N}_0  }$ is a complete system of solutions of (\ref{eq_heatEqOnly})
 in the following sense.
\begin{proposition}\label{Prop Complete system}
Let $u$ be a classical solution of \eqref{eq_heatEqOnly} in $D(s)$, continuous in the closure $\bar D(s)$. Then for any compact set $D_c\subset D(s)$ and any given $\varepsilon>0$ there exist $N$ and constants $a_0,\ldots,a_N$ such that
\begin{equation*}
\max_{(x,t)\in D_c}\left| u(x,t) - \sum_{n=0}^N a_nH_n(x,t)\right|<\varepsilon.
\end{equation*}
Moreover, if $s(t)$ can be extended to a function analytic in the disk $\{t\in \mathbb{C}:|t|\le T\}$, the uniform approximation property is valid on the whole set $\bar D$.
\end{proposition}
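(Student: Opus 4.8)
The plan is to transport the completeness of the heat polynomials, which holds for the heat equation on $D(s)$ by the result of Colton quoted above \cite{COLTON1977}, through the transmutation operator $\mathbf{T}$. Since $\mathbf{T}$ acts only on the spatial variable, intertwines $\partial_{x}^{2}$ with $\mathbf{A}$, and satisfies $\mathbf{T}[h_{n}]=H_{n}$, it carries solutions of the heat equation to solutions of \eqref{eq_heatEqOnly} and the system $\{h_{n}\}$ to $\{H_{n}\}$. It therefore suffices to show that $\mathbf{T}$ is a linear isomorphism between the two solution spaces that is continuous, together with its inverse, in the topology of uniform convergence on compact sets; the completeness then transfers verbatim.

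First I would record the invertibility of $\mathbf{T}$. Because $\mathbf{T}=I+\mathbf{K}$ with $\mathbf{K}$ a Volterra integral operator whose kernel $K$ is continuous (indeed $C^{1}$), standard Volterra theory supplies a bounded inverse $\mathbf{T}^{-1}=I+\mathbf{L}$, where $\mathbf{L}$ is again Volterra with a continuous resolvent kernel. Conjugating the intertwining identity $\mathbf{A}\mathbf{T}=\mathbf{T}\,\partial_{x}^{2}$ gives $\mathbf{T}^{-1}\mathbf{A}=\partial_{x}^{2}\,\mathbf{T}^{-1}$. Acting in $x$ with $t$ as a parameter, I set $w:=\mathbf{T}^{-1}u$; then $w_{xx}-w_{t}=\mathbf{T}^{-1}(\mathbf{A}u-u_{t})=0$, while the $C^{1}$ regularity of the kernel (together with the standard smoothing property of transmutations) ensures $w\in C^{2,1}$ and continuous up to the boundary. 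Thus $w$ is a genuine classical solution of the heat equation.

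Next I would invoke Colton's theorem to approximate $w$: given the compact set $D_{c}$ and $\varepsilon>0$, choose $N$ and $a_{0},\dots,a_{N}$ making $\bigl\|w-\sum_{n=0}^{N}a_{n}h_{n}\bigr\|_{\infty}$ small on a compact neighbourhood of $D_{c}$. Applying $\mathbf{T}$ and using $\mathbf{T}[h_{n}]=H_{n}$ yields
\begin{equation*}
\left|u(x,t)-\sum_{n=0}^{N}a_{n}H_{n}(x,t)\right|=\left|\mathbf{T}\Bigl(w-\sum_{n=0}^{N}a_{n}h_{n}\Bigr)\right|\le\bigl(1+2X_{0}\|K\|_{\infty}\bigr)\left\|w-\sum_{n=0}^{N}a_{n}h_{n}\right\|_{\infty},
\end{equation*}
where $X_{0}=\max\{x:(x,t)\in D_{c}\}\le L$ bounds the range of integration, so the prefactor is a finite constant independent of $N$. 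Making the heat-polynomial approximation fine enough forces the right-hand side below $\varepsilon$, which is the claimed estimate. For the final assertion, when $s$ extends analytically Colton's theorem yields uniform approximation of $w$ on all of $\bar D$, and since the bound above holds with $X_{0}=L$ uniformly over $\bar D$, the same computation upgrades the conclusion to the whole closed set.

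The step I expect to be the main obstacle is making the pull-back rigorous in view of the symmetric domain of integration: both $\mathbf{T}$ and $\mathbf{T}^{-1}$ evaluate their argument on $[-x,x]$, whereas $u$ is prescribed only for $x>0$. One must therefore realize $u$ as the restriction to $x>0$ of a solution on the symmetrized strip $\{|x|<s(t)\}$ (with $q$ extended evenly), which is legitimate precisely because solutions of these equations are governed by their Cauchy data at $x=0$, and then check that both the completeness of the heat polynomials and the operator bounds persist on the symmetric domain. Verifying that $w=\mathbf{T}^{-1}u$ is a bona fide classical heat solution there, with operator norms staying uniformly bounded over the compact set in question, is the technical heart; once this is secured, the remainder is the soft functional-analytic transfer described above.
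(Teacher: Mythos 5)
Your overall strategy coincides with the paper's: the published proof is exactly Colton's argument (Theorem 2.3.3 of \cite{colton1976solution}) with the operator $\mathbf{T}$ of Theorem \ref{Teor_TrOp} and its inverse in place of Colton's integral operator, i.e., pull $u$ back to a solution of the heat equation, approximate it by heat polynomials using \cite{COLTON1977}, and push forward using $\mathbf{T}[h_n]=H_n$ together with the boundedness of $\mathbf{T}$. Your push-forward estimate with the constant $1+2X_0\|K\|_\infty$ is correct as far as it goes.

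The genuine gap is at the point you yourself call the technical heart, and the fix you propose there would fail. To form $w=\mathbf{T}^{-1}u$ you need a heat-equation solution $w$ on a symmetrized domain with $\mathbf{T}w=u$, and you propose to obtain it by extending $u$ across $x=0$, ``legitimate precisely because solutions of these equations are governed by their Cauchy data at $x=0$.'' This conflates uniqueness with existence. The line $x=0$ is part of the boundary of $D(s)$, not of its interior, so a solution that is merely continuous on $\bar D(s)$ need not even possess Cauchy data there; and the sideways (non-characteristic) Cauchy problem for parabolic equations is ill-posed, with solvability requiring Gevrey-type regularity of the data. Indeed, if $u$ extended as a solution across $x=0$, then $u(0,\cdot)$ would automatically be smooth in $t$, which is false for generic boundary values; so neither the extension nor $w=\mathbf{T}^{-1}u$ exists in general. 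This is precisely why the paper does not apply $\mathbf{T}^{-1}$ to $u$ directly: it first covers the compact set $D_c$ by a subdomain $D_a\subset\bar D(s)$ whose right boundary $x_1(t)$ is analytic, so that Colton's machinery --- which constructs the heat-equation preimage using the analyticity of the boundary --- applies on $D_a$, and it first assumes $q\in C^1[0,L]$ (as Colton's theorem requires), recovering continuous $q$ by an approximation argument at the end. These two reductions are missing from your sketch, and without them the functional-analytic transfer has no well-defined object $w$ to act on.
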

\begin{proof}
Suppose first that $q\in C^1[0,L]$. Note that any compact set $D_c$ can be covered by a subset $D_a\subset \bar D(s)$ of the form $D_a=\{(x,t)\in\mathbb{R}^2: 0\le x\le x_1(t)\le s(t),\ 0\le t\le T\}$
with analytic function $x_1(t)$. For the set $D_a$ the proof is completely similar to that of \cite[Theorem 2.3.3]{colton1976solution} with the only change that the transmutation operator $\mathbf{T}$ and its inverse are used. For the general case $q\in C[0,L]$ one approximates $q$ by a $C^1$ function.
\end{proof}

\begin{theorem}
\label{Cor_Hn} The transmuted heat polynomials admit the following form%
\[
H_{n}(  x,t)  =\sum_{k=0}^{\left[  n/2\right]  }
c_{k}^{n}\varphi_{n-2k}(  x)  t^{k}.
\]
\end{theorem}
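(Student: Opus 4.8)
The plan is to compute $H_n = \mathbf{T}[h_n]$ directly by exploiting the linearity of the transmutation operator $\mathbf{T}$ together with Theorem \ref{ThFormPowers}, which tells us precisely how $\mathbf{T}$ acts on the monomials $x^m$. Since $\mathbf{T}$ acts only on the spatial variable $x$ (it is the Volterra integral operator in $x$, treating $t$ as a parameter), the key observation is that the variable $t$ simply passes through $\mathbf{T}$ as a constant.

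First I would write out the definition of the heat polynomial,
\[
h_n(x,t) = \sum_{k=0}^{[n/2]} c_k^n\, x^{n-2k}\, t^k,
\]
and apply $\mathbf{T}$ to both sides. Because $\mathbf{T}$ is linear and acts in the $x$-variable only, the coefficients $c_k^n t^k$ are constants with respect to the action of $\mathbf{T}$, so
\[
H_n(x,t) = \mathbf{T}[h_n](x,t) = \sum_{k=0}^{[n/2]} c_k^n\, t^k\, \mathbf{T}\bigl[x^{n-2k}\bigr](x).
\]
Then I would invoke Theorem \ref{ThFormPowers}, which gives $\mathbf{T}[x^m] = \varphi_m(x)$ for every $m \in \mathbb{N}_0$. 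Substituting $m = n-2k$ yields $\mathbf{T}[x^{n-2k}] = \varphi_{n-2k}(x)$, and the claimed formula follows immediately.

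I do not expect any serious obstacle here; the statement is essentially a bookkeeping corollary of the two ingredients already established. The one point that deserves explicit justification is that $\mathbf{T}$, being an integral operator in $x$ alone, commutes with multiplication by functions of $t$ and with the finite sum; this is where the phrase ``$\mathbf{T}$ acts on the spatial variable with $t$ as a parameter'' does the real work. Once that is noted, the proof is a two-line substitution, and in fact the displayed computation above is essentially the complete argument.
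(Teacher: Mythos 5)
Your proposal is correct and is essentially identical to the paper's own proof: both apply the linearity of $\mathbf{T}$ (acting in $x$ only, with $t$ as a parameter) to the defining sum of $h_n$ and then invoke Theorem \ref{ThFormPowers} to replace $\mathbf{T}\left[x^{n-2k}\right]$ by $\varphi_{n-2k}$. Your explicit remark that $\mathbf{T}$ commutes with multiplication by functions of $t$ is a point the paper leaves implicit, but the argument is the same.
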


\begin{proof}
This equality is an immediate corollary of Theorem \ref{ThFormPowers}. Indeed,
we have $H_{n}(  x,t)  =\mathbf{T}h_{n}(  x,t)  =\sum_{k=0}^{\left[  n/2\right]  }c_{k}^{n}\mathbf{T}\left[  x^{n-2k}\right]  t^{k}$, where Theorem
\ref{ThFormPowers} is used.
\end{proof}

The explicit form of the functions $H_{n}$ presented in this theorem makes
possible the construction of the approximate solution to Problem
\ref{Problem_FBP} by the THP method.

\section{Description of the method\label{Sec_Al}}

We proceed to the step by step construction of the THP method, summarizing the
algorithm at the end of the section.

Assume that we have already calculated the formal powers $\varphi_{k}$ and the
functions $H_{n}$. Further, let $N$ be the highest index of the formal power
considered, or equivalently the highest degree in $x$ of the considered heat
polynomials. We denote by
\begin{equation}
u_{N}(  x,t)  = \sum_{n=0}^{N}
a_{n}H_{n}(  x,t)  \label{Eq_SolAprox}%
\end{equation}
the approximation of the solution $u(  x,t)  $, and by $\bar
{a}=\left(  a_{0},...,a_{N}\right)^T$ the column-vector of the unknown coefficients.
Note that by construction functions $u_{N}$ satisfy equation \eqref{eq_FBP1},
all we need is to find suitable coefficients $\bar{a}$.

Denote by $\left\{  t_{i}\right\}  $ an ordered set of $N_{t}+1$ points of the
interval $[  0,T]  $, with $t_{0}=0<t_{1}<...<t_{N_{t}}=T$,
$\bar{t}=\left(  t_{0},...,t_{N_{t}}\right)^T  $, and by $\left\{
x_{i}\right\}  $ an ordered set of $N_{x}+1$ points of the initial boundary,
with $x_{0}=0<x_{1}<...<x_{N_{x}}=l$, $\bar{x}=\left(  x_{0},...,x_{N_{x}%
}\right)^T  $.

Consider a set of $K+1$ linearly independent differentiable functions,
$\beta_{k}:[  0,T]  \rightarrow\mathbb{R}$. We are looking for
the free boundary in the form
\[
s_{K}(  t)  =\sum_{k=0}^{K}
b_{k}\beta_{k}(  t)  .
\]
Denote the vector of the unknown coefficients\footnote{It is possible to search for the free boundary in a more general form, see Section \ref{SubSec_NL_FreeBoundary}.} by $\bar{b}=\left(
b_{0},...,b_{K}\right)^T$.
For any function $v$ defined on the set of points $\left\{  y_{i}\right\}
_{i=0,1,...,N_{y}}$ the following notation is introduced
\[
v\left(  \bar{y}\right)  =\bigl(  v\left(  y_{0}\right)  ,...,v\left(
y_{N_{y}}\right)  \bigr)^T.
\]

A numerical approximation problem consists in finding a set of the
coefficients $\left(  \bar{a},\bar{b}\right)  $ that best fits the conditions
of Problem \ref{Problem_FBP} with the following norm chosen%\footnote{All functions that we are considering here are real valued.}
\[
\left\Vert v(  \bar{y})  \right\Vert ^{2}=
\sum_{i=0}^{N_{y}}
|v(  y_{i})|^{2}.
\]
The following magnitudes are to be minimized,%
\begin{align*}
I_{1}\left(  \bar{a},\bar{b}\right)   &  =\left\Vert \left[  \mathbf{G}%
_{1}u_{N}\right]  \left(  \bar{x},0\right)  -g_{1}(  \bar{x})
\right\Vert ,\\
I_{2}\left(  \bar{a},\bar{b}\right)   &  =\left\Vert \left[  \mathbf{G}%
_{2}u_{N}\right]  \left(  0,\bar{t}\right)  -g_{2}(  \bar{t})
\right\Vert ,\\
I_{3}\left(  \bar{a},\bar{b}\right)   &  =\left\Vert u_{N}\bigl(  s_{K}\left(
\bar{t}\right)  ,\bar{t}\bigr)  -g_{3}(  \bar{t})  \right\Vert ,\\
I_{4}\left(  \bar{a},\bar{b}\right)   &  =\left\Vert \left(  u_{N}\right)
_{x}\bigl(  s_{K}\left(  \bar{t}\right)  ,\bar{t}\bigr)  +\dot{s}_{K}(\bar
{t})\right\Vert .
\end{align*}
Each of them is related to a boundary condition from
\eqref{eq_FBP_b1}--\eqref{eq_FBP_b4}. With introduction of the value function
\begin{equation}
F\left(  \bar{a},\bar{b}\right)  =\sum_{i=1}^{4}
I^2_{i}\left(  \bar{a},\bar{b}\right)  , \label{eq_F}%
\end{equation}
the minimization problem can be stated as follows.

\begin{problem}
\label{Problem_M}Find\footnote{For a function $f:X\rightarrow Y$, the
$\arg\min$ over a subset $S$ of $X$ is defined as%
\[
\underset{x\in S\subseteq X}{\arg\min}\, f(  x)  :=\left\{  x:x\in
S\wedge\forall y\in S:f(  y)  \geq f(  x)  \right\}  .
\]
}
\[
\underset{\left(  \bar{a},\bar{b}\right)  }{\arg\min}\,F\left(  \bar{a},\bar
{b}\right)  ,
\]
subject to
\begin{equation}
0<s_{K}( t)  \le L,\qquad t\in [0,T]. \label{eq:_probM_condS}%
\end{equation}
\end{problem}

Note that instead of the uniform norm chosen in \cite{Reemtsen1982} for the heat polynomials method, we used the $L_2$ norm in the value function \eqref{eq_F}. The main reason for such choice was to take advantage of the presence of the so-called separable linear parameters
(see \cite[Chap.\ 6.2]{ross1990nonlinear}, see also \cite{herreraporter2017}) and to reduce the number of parameters in the value function. Indeed, for each fixed $\bar b$, the constrained minimization Problem \ref{Problem_M} reduces to an unconstrained linear least squares problem for parameters $\bar a$ and can be easily solved exactly (see Subsection \ref{Subsect LinMin}). That is, for each $\bar b$ we can define
\begin{equation}\label{Linear LS}
\bar a(\bar b) :=\underset{\bar{a}}{\arg\min}\,F\left(  \bar{a},\bar
{b}\right).
\end{equation}
So instead of minimizing the value function $F$ over an $N+K+2$ dimensional space of parameters $(\bar a, \bar b)$, the problem can be reduced to minimization of the function
\[
\tilde F(\bar b):= F\bigl(\bar a(\bar b), \bar b\bigr)
\]
over a $K+1$ dimensional space. Such reformulation of the original optimization problem leads to a more robust convergence of numerical optimization algorithms, c.f.,  \cite{herreraporter2017}.  We state this new minimization problem as follows.

\begin{problem}
\label{Problem_Mtil}Find
\[
\underset{\bar{b}}{\arg\min}\,\tilde{F}\left(  \bar{b}\right)
\]
subject to
\begin{equation}
0<s_{K}( t)  \le L,\qquad t\in [0,T]. \label{eq:_probMtil_condS}
\end{equation}
\end{problem}

\subsection{Linear minimization problem}\label{Subsect LinMin}
Let a vector $\tilde{b}$ be fixed. We have to solve the linear least squares problem \eqref{Linear LS}. Let us denote the corresponding approximate boundary by $\tilde{s}$,
\[
\tilde{s}(  t)  =\sum_{i=0}^{N_{\beta}}
\tilde{b}_{i}\beta_{i}(  t)  .
\]
Note that problem \eqref{Linear LS} is equivalent to solving the overdetermined system of linear equations (resulting from the boundary conditions \eqref{eq_FBP_b1}--\eqref{eq_FBP_b4})
\begin{align*}
\left[  \mathbf{G}_{1}u_{N}\right]  \left(  \bar{x},0\right)   &  =\sum_{n=0}^{N}
a_{n}B_{n}\left(  \bar{x}\right)  , &
u_{N}\bigl(  \tilde{s}\left(  \bar{t}\right)  ,\bar{t}\bigr)   &  =\sum_{n=0}^{N}
a_{n}D_{n}\left(  \bar{t}\right)  ,\\
\left[  \mathbf{G}_{2}u_{N}\right]  \left(  0,\bar{t}\right)   &  =
\sum_{n=0}^{N}
a_{n}C_{n}\left(  \bar{t}\right)  ,&
\left(  u_{N}\right)  _{x}\bigl(  \tilde{s}\left(  \bar{t}\right)  ,\bar
{t}\bigr)   &  =\sum\limits_{n=0}^{N}
a_{n}E_{n}\left(  \bar{t}\right)  ,
\end{align*}
where
\begin{align}
B_{n}\left(  \bar{x}\right)   &  =
\begin{cases}
c_{0}^{n}\gamma_{11}(  \bar{x})  \varphi_{n}(  \bar{x})
+c_{\frac{n-1}{2}}^{n}\gamma_{12}(  \bar{x})  \varphi_{1}^{\prime
}(  \bar{x})  , &   n\text{ odd},\\
c_{0}^{n}\gamma_{11}(  \bar{x})  \varphi_{n}(  \bar{x})
,   & n\text{ even},
\end{cases}
\label{eq_CoefB}\\
C_{n}\left(  \bar{t}\right)   & =
\begin{cases}
c_{\frac{n-1}{2}}^{n}\gamma_{22}(  \bar{t})  \bar{t}^{\frac{n-1}%
{2}},   & n\text{ odd},\\
c_{\frac{n}{2}}^{n}\gamma_{21}(  \bar{t})  \bar{t}^{\frac{n}{2}}, &
 n\text{ even},
\end{cases} \label{eq_CoefC} \\
D_{n}(  \bar{t})   &  =\sum_{k=0}^{[  n/2]  }
c_{k}^{n}\varphi_{n-2k}(  \tilde{s}(  \bar{t})
\bar{t}^{k},\qquad
E_{n}(  \bar{t})     =\sum_{k=0}^{[  n/2]  }
c_{k}^{n}\varphi_{n-2k}^{\prime}(  \tilde{s}(  \bar{t})
)  \bar{t}^{k},
\label{eq_CoeffsDE}
\end{align}
or in the matrix form
\begin{equation}\label{eq ls}
\mathbf{B} \bar a \simeq \mathbf{g},
\end{equation}
where
\[
\mathbf{B} = \begin{pmatrix}
B_{0}(  \bar{x})  & \dots & B_{N}\left(  \bar{x}\right) \\
C_{0}(  \bar{t})  & \dots & C_{N}\left(  \bar{t}\right) \\
D_{0}(  \bar{t})  & \dots & D_{N}\left(  \bar{t}\right) \\
E_{0}(  \bar{t})  & \dots & E_{N}\left(  \bar{t}\right)
             \end{pmatrix}
             \qquad \text{and}\qquad
\mathbf{g} = \begin{pmatrix}
               g_{1}(  \bar{x}) \\
               g_{2}(  \bar{t}) \\
               g_{3}(  \bar{t}) \\
               \dot{\widetilde s} (  \bar{t})
             \end{pmatrix}.
\]
Note that the derivatives $\varphi'$ in \eqref{eq_CoeffsDE} do not require numerical differentiation and can be obtained in a closed form from \eqref{eq_phik}.

The solution of this overdetermined system coincides
\cite[Thm.\ 5.14]{MadsenNielsen2010} with the unique solution of the following fully determined system of linear equations
\begin{equation}\label{eq Normal LS}
\mathbf{C}\bar a = \mathbf{h},\qquad \text{where } \mathbf{C} = \mathbf{B}^T\mathbf{B}\text{ and }\mathbf{h} = \mathbf{B}^T \mathbf{g}.
\end{equation}

See also \cite{lawson1995solving} or \cite{wright2006numerical} for various methods of efficient solution of equation \eqref{eq ls} or \eqref{eq Normal LS}. In particular, we used the Moore-Penrose pseudo-inverse method.

\subsection{Implementation}

Here we present the algorithm of the implementation of the THP method
for Problem \ref{Problem_FBP}.

\begin{enumerate}
\item Find a particular solution of equation \eqref{Eq_tr1} satisfying
\eqref{Eq_tr2}. For example, the SPPS method presented in
\cite{KravchenkoPorter10} can be used.

\item Construct the formal powers on an interval from $0$ to some $L>l$. In this paper we represented all the functions involved by their values on a uniform mesh and used the Newton-Cotes six point integration rule. Since we may need the values of the formal powers at arbitrary points $\tilde{s}\left(  \bar{t}\right)\subset [0,L]$, we approximated the formal powers by splines passing through their values on the selected mesh. Spline integration can be used as well for the construction of the formal powers, c.f., \cite{khmelnytskaya2013wave}. See also \cite{kravchenko2015representation} for the discussion of other possible methods.

\item Compute the coefficients $B_{n}$ and $C_{n}$ given by \eqref{eq_CoefB}
and \eqref{eq_CoefC} and the respective functions $g_{1}$ and $g_{2}$. Since these
conditions are independent of the free boundary (i.e.\ of the choice of
$\tilde{b}$), they need to be computed only once.

\item Construct a function that solves problem \eqref{Linear LS} for each
given set of coefficients $\tilde{b}$.

\item Construct the value function $\tilde{F}\left(  b\right)  $ using
\eqref{eq_F}.

\item Solve the constrained minimization Problem \ref{Problem_Mtil} using any suitable algorithm, see, e.g., \cite{wright2006numerical}. For the numerical illustration we used the Matlab function \texttt{fmincon}. Note that for faster convergence we may initially solve the minimization Problem \ref{Problem_Mtil} for some small value $K'<K$ and use the coefficients $b^0_k$, $k=0,\ldots,K'$ obtained as an initial value for the optimization algorithm for larger value $K$.
\end{enumerate}

\section{Numerical illustration\label{Sec_NumEx}}

\subsection{Example with an exact solution and a tractable free boundary}

For the numerical experiment we constructed an example of an FBP admitting an exact solution.

For every $s\in C^{1}[  0,1]  $, with $s(  0)  =1$
consider the domain
\[
D(s)=\left\{  (x,t)\in\mathbb{R}^{2}:0<x<s(t),\, 0<t<1\right\}  .
\]

\begin{problem}
\label{Problem_E}Find the pair $(  s,u)  $ such that $s\in
C^{1}[  0,1]$, $u\in C^{2,1}\left(  D(
s)  \right)  $ and
\[
u_{xx}-x^{2}u=u_{t},\qquad (x,t)\in D(s),
\]
with the following boundary conditions
\begin{align}
u(  x,0)   &  =e^{-\frac{x^{2}}{2}},& x\in[0,1]  , \label{Eq_ex615a}\\
u_{x}(0,t)  &  =0,& t\in(  0,1)  ,\label{Eq_ex615b}\\
u(s(t),t)  &  =e^{-\operatorname{Ei}^{-1}\left(  2C-2e^{-t} \right)  -t},& t\in(0,1)  ,\label{Eq_ex615c}\\
u_{x}(s(t),t)  &  =-\dot s (t),& t\in(0,1)  ,\label{Eq_ex615d}
\end{align}
where
\[
C=\frac{1}{2}\operatorname{Ei}\left(  \frac{1}{2}\right)  +1\approx1.2271,
\]
and $\operatorname{Ei}^{-1}$ stands for the inverse function of the exponential integral
$\operatorname{Ei}$,%
\[
\operatorname{Ei}\left(  x\right)  =-\operatorname{v.p.}\int_{-x}^{\infty}\frac{e^{-t}}{t}dt.
\]
\end{problem}

Problem \ref{Problem_E} possesses an exact solution $u$ given by
\begin{equation}
u(x,t)=e^{-\frac{x^{2}}{2}-t},\qquad(x,t)\in D(s), \label{Eq_ex2}%
\end{equation}
and the free boundary
\begin{equation}
s\left(  t\right)  =\sqrt{2\operatorname{Ei}^{-1}\left(  2
C-2e^{-t} \right)  },\qquad t\in\left[
0,1\right]  . \label{eq_ExactBoundary}%
\end{equation}

\begin{remark}\label{Remark Existance}
The existence and the uniqueness of the solution for  $t\in \left[
0,1\right]$ is guaranteed by \cite[Theorem 1]{FASANO1979247}. For the
application of the theorem it is
necessary to use the transformation $v(x,t)=u(x,t)+\gamma(  t)  $, where $\gamma(t)
=\exp\left(-\operatorname{Ei}^{-1}\left(  2C-2e^{-t}\right)-t\right)$.
\end{remark}

\subsection{Numerical illustration}

We proceed by presenting numerical results delivered by the algorithm
described in Section \ref{Sec_Al}. All the
calculations were carried out in Matlab R2012a.

A particular solution to equation \eqref{Eq_tr1} and the formal powers \eqref{eq_phik} were represented by their values on a 2000-points uniform mesh. The Newton-Cotes integration rule was used for their computation. The number of formal powers considered was set at $N=12$. Finally the function \texttt{spapi} was used to create splines approximating the formal powers. For computing the $\operatorname{Ei}^{-1}$ we have used a polynomial interpolation on a fine grid for the function $\operatorname{Ei}$. The values for the function $\operatorname{Ei}$ were constructed by the series expansions presented in \cite[(8.214)]{GradshteynRyzhik07}, see also \cite{pecina1986function}.

The sets of points $\left\{  t_{i}\right\}  $ and $\left\{  x_{i}\right\}  $
considered are both equidistant grids of $101$ points. The
free boundary $s$ is sought in terms of polynomials. The condition $s(0)  =1$ inspires the following form
\[
s_{K}\left(  \bar{t}\right)  =1+\sum_{j=1}^{K}
b_{j}\bar{t}^{j}.
\]
In our calculations we have considered $K=6$.

The linear problem \eqref{Linear LS} was solved by using the Matlab function
\texttt{pinv}. We have found the solution to Problem \ref{Problem_Mtil} using
the Matlab routine \texttt{fmincon}.

The proposed algorithm converged rapidly for various initial free boundaries tested. On Figure \ref{fig FreeBoundaries} we present
one of the initial boundaries, the exact free boundary $s(t)$ calculated from equation \eqref{eq_ExactBoundary} and the difference between the exact free boundary $s(t)$ and the obtained approximation $s_K(t)$.
The obtained approximate boundary was
\begin{equation}
s_{K}(t)  =1
+0.60657885t
-0.30458770t^2
+0.03631846t^3
+0.06761711t^4
-0.05111378t^5
+0.01270860t^6, \label{Eq_AproxSolution}%
\end{equation}
here and after the coefficients are presented up to the eighth decimal place.
The coefficients of the vector $\bar{a}$ that corresponds to the solution of
the linear problem \eqref{Linear LS} with $\bar{b}$ consisting of the
coefficients from \eqref{Eq_AproxSolution} are
\begin{align*}
a_0 &= 1.00000201, & a_1 &= -3.88882172\cdot 10^{-6}, & a_2 &=  -5.00020660\cdot 10^{-1},\\
a_3 &= 2.67326664\cdot 10^{-5}, & a_4 &= 4.16822991\cdot 10^{-2}, & a_5 &= -2.03778980\cdot 10^{-5}, \\
a_6 & = -1.38990292\cdot 10^{-3}, & a_7 &= 4.02691726\cdot 10^{-6}, & a_8 &= 2.43785480\cdot 10^{-5}, \\
a_9 &=-2.56424452\cdot 10^{-7}, & a_{10} &= -2.28538929\cdot 10^{-7}, & a_{11} &= 4.70925279\cdot 10^{-9}, \\
a_{12} &= 7.12497038\cdot 10^{-10}.
\end{align*}

\begin{figure}[ptb]
\centering
\includegraphics[bb=198 309 414 482, height=2.4in, width=3in]{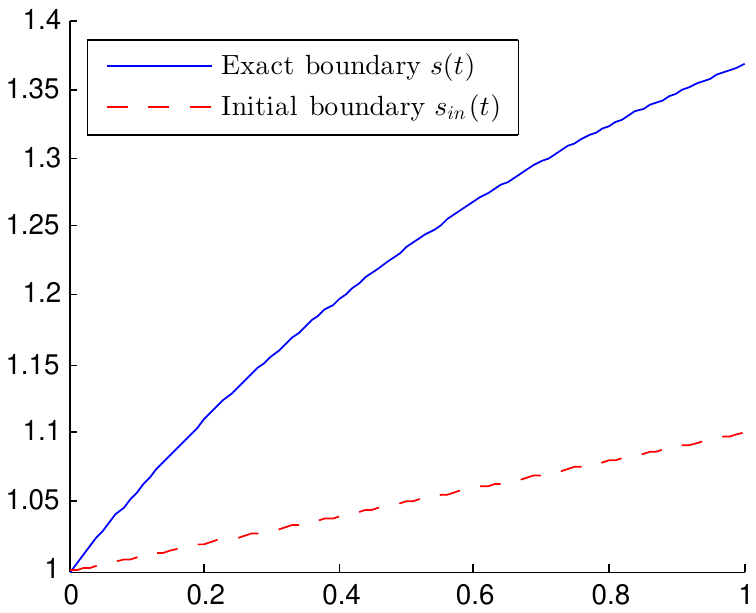}\quad
\includegraphics[bb=198 309 414 482, height=2.4in, width=3in]{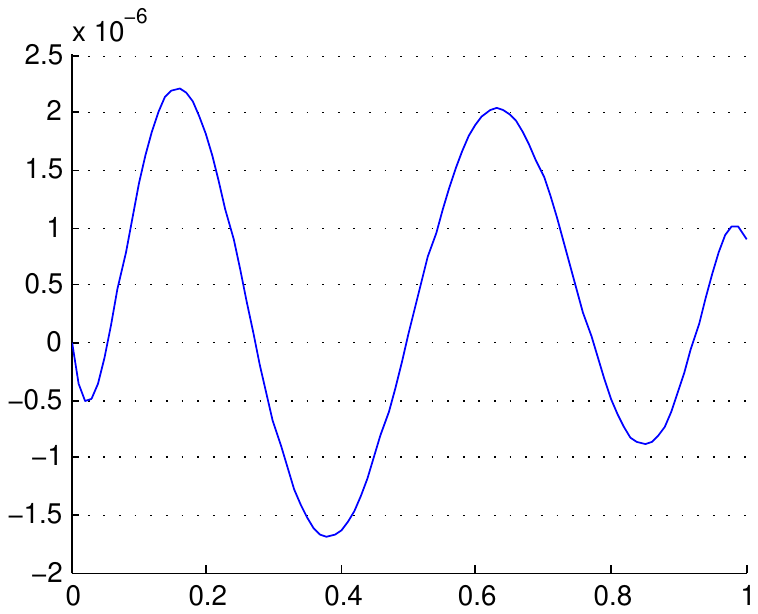}
\caption{Left: the initial boundary $s_{in}(t)=1+0.1t$ and the exact boundary $s(t)$ for the Problem \ref{Problem_E}. Right: the difference between the exact boundary $s(t)$ and the approximate boundary $s_K(t)$.
}
\label{fig FreeBoundaries}
\end{figure}

\begin{figure}[ptb]
\centering
\includegraphics[bb=198 309 414 482, height=2.4in, width=3in]{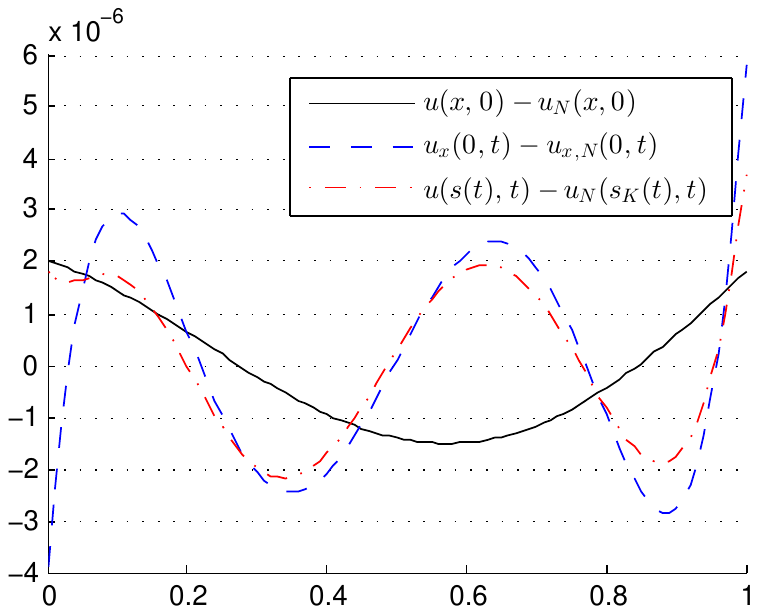}\quad
\includegraphics[bb=198 309 414 482, height=2.4in, width=3in]{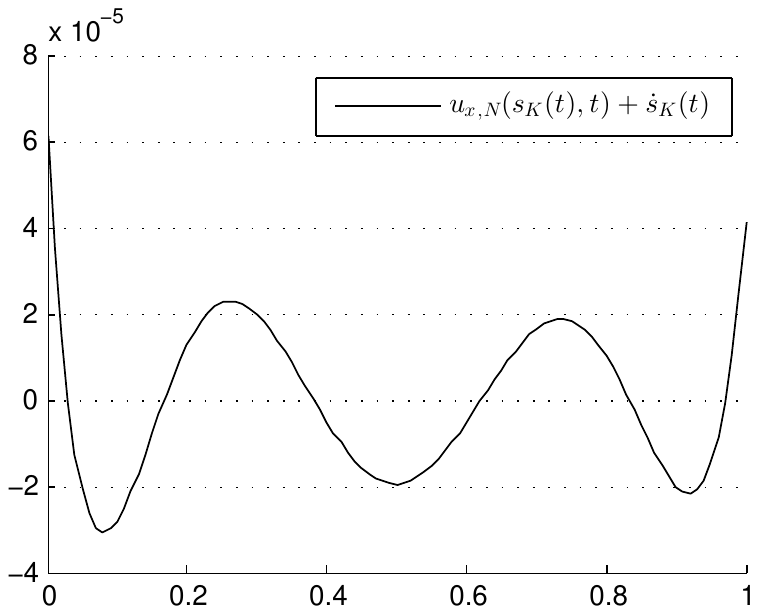}
\caption{
Accuracy of fulfillment of the boundary conditions \eqref{Eq_ex615a}--\eqref{Eq_ex615d} for the approximate solution
$u_N$ and approximate free boundary $s_K$ obtained by the algorithm. On the left: conditions \eqref{Eq_ex615a}--\eqref{Eq_ex615c}, on the right: condition \eqref{Eq_ex615d}.}
\label{Fig DiffBoundaries}
\end{figure}

Figure \ref{Fig DiffBoundaries} illustrates the accuracy of fulfillment of the boundary conditions \eqref{Eq_ex615a}--\eqref{Eq_ex615d}.
The absolute value of the difference between the exact solution
\eqref{Eq_ex2} and the obtained approximate solution in the domain $D(s)$ is
presented on Figure \ref{Fig max u u_N}.
\begin{figure}[ptbh]
\centering
\includegraphics[height=3.6in, width=5in]
{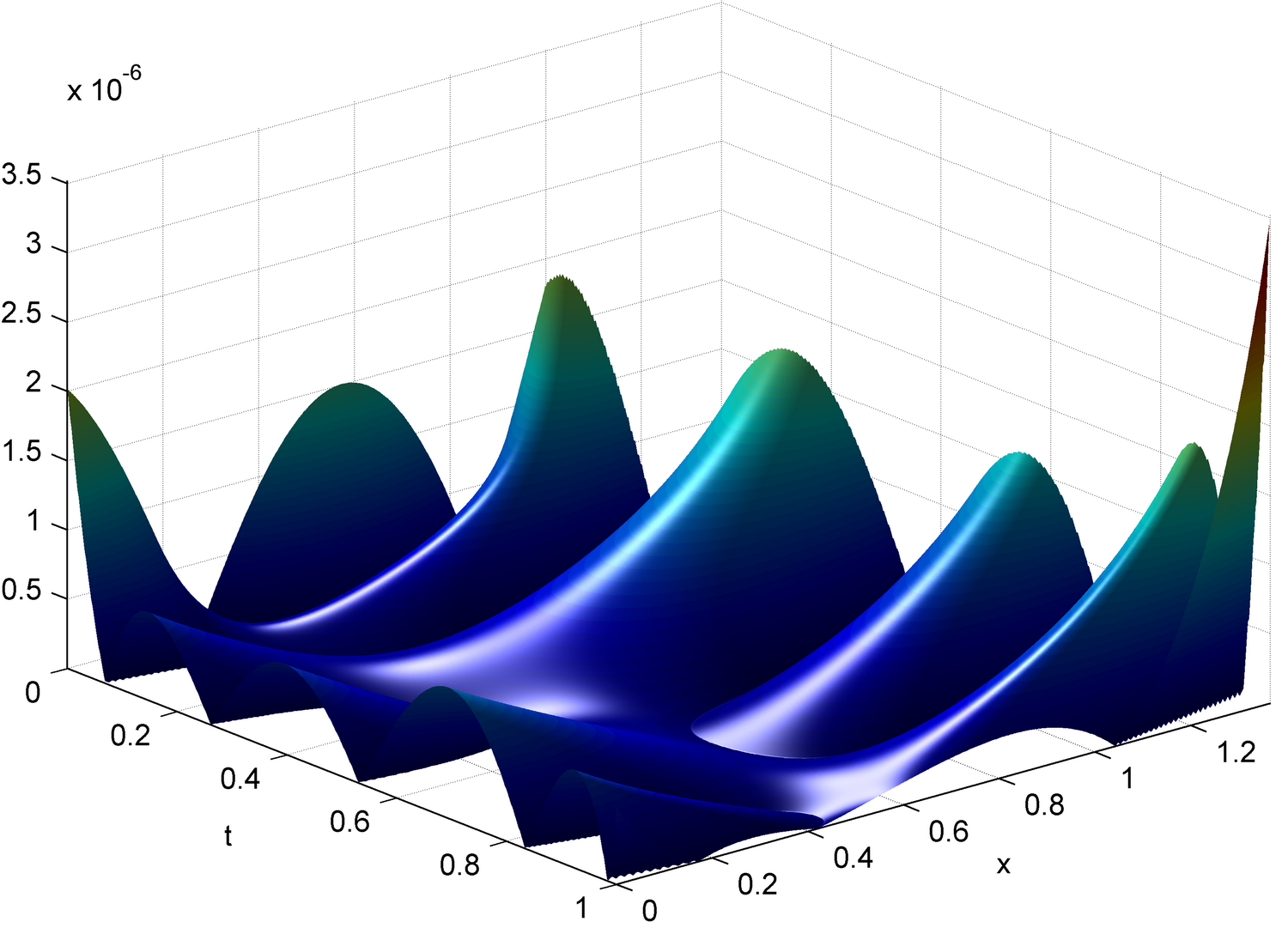}
\caption{The absolute difference $| u(x,t)-u_{N}(x,t)|$  between the exact and the approximate solutions for the Problem \ref{Problem_E} in the domain $D(s)$.}
\label{Fig max u u_N}
\end{figure}

Note that for the problem considered one may look for an approximate solution in the form
\[
u_{N}(  x,t)  = \sum_{n=0}^{N/2}
a_{2n}H_{2n}(  x,t),
\]
i.e., having only even coefficients $a_{2n}$. In such way the boundary condition \eqref{Eq_ex615b} is satisfied automatically. The proposed algorithm can be applied with minimal modifications for such simplified form of an approximate solution, however we did not observe any gain in the obtained result, the original formulation of the algorithm performed equally well.

\section{Possible extensions of the method to more general FBPs and final
remarks\label{Sec_DiffFBP}}

\subsection{Generalization of the operator $\mathbf{A}$}
The presented method can be extended onto operators of the form
\[
\mathbf{C}=\alpha_{1}(x)\frac{\partial^{2}}{\partial x^{2}%
}+\alpha_{2}(x)  \frac{\partial}{\partial x}+\alpha_{3}(x)
\]
with sufficiently regular coefficients $\alpha_{i}(x)$. For the
construction of the formal powers in this case see
\cite{KravchenkoTorba_Neumann_16}. The FBPs with this type of operators are
very common in financial applications.

\subsection{Generalizations of the conditions on a free boundary}

\subsubsection{Linear generalization}

The conditions \eqref{eq_FBP_b3} and \eqref{eq_FBP_b4} are sometimes referred
to as Stefan's conditions. Our algorithm can be applied to more general
conditions of the type%
\begin{align*}
\mathbf{G}_{3}u(s( t),t)   &  =g_{3}\bigl(t,s(t)  ,\dot{s}(  t)  \bigr)  ,\\
\mathbf{G}_{4}u(s(t),t)   &  =g_{4}\bigl(t,s(t),\dot{s}(t)\bigr)  ,
\end{align*}
for some first order linear differential operators
$\mathbf{G}_{3}$ and $\mathbf{G}_{4},$ and where $g_{3}$ and $g_{4}$ are
given functions of three variables.

\subsubsection{Nonlinear generalization}

For some FBPs we can only express the conditions \eqref{eq_FBP_b3} and
\eqref{eq_FBP_b4} as
\begin{align*}
g_{3}\bigl(  t,s(t)  ,\dot{s}(  t)  ,u(  s(t)  ,t)  ,u_{x}(  s(t),t)  \bigr)   &
=0,\\
g_{4}\bigl(  t,s(t)  ,\dot{s}(  t)  ,u(  s(t)  ,t)  ,u_{x}(  s(t),t)  \bigr)   &
=0,
\end{align*}
where $g_{3}$ and $g_{4}$ are given functions of five variables. In this case
the elegant decomposition into a linear and nonlinear problems will be lost, however the hierarchic structure of the minimization problem remains, i.e., the minimization Problem \ref{Problem_M} can be reduced to the Problem \ref{Problem_Mtil} with the only difference that the auxiliary problem \eqref{Linear LS} may be nonlinear.

\subsection{Nonlinear form of the boundary\label{SubSec_NL_FreeBoundary}}

Often in applications some additional information is available about the free
boundary structure. With this additional knowledge (or for other reasons) the
linear decomposition of the boundary might not be appropriate. It is easy to
see that we can still apply the algorithm assuming that
\[
s_{K}=\xi\left(  b_{0},...,b_{K}\right)  ,
\]
for the known function $\xi$. This nonlinear structure can slow down the
algorithm for the search of the minimum in Problems \ref{Problem_M} or
\ref{Problem_Mtil}, due to the constraints \eqref{eq:_probM_condS} and
\eqref{eq:_probMtil_condS} respectively. In our particular Matlab
implementation, we have used \texttt{fmincon} function that performs better with a linear constraint on the boundary than a nonlinear one.

\subsection{Concluding remarks}

A method for approximate solution of a large variety of FBPs is proposed. It
is based on a possibility to construct a complete system of solutions of
a parabolic equation called transmuted heat polynomials. The numerical
implementation is relatively simple and direct. The time required for
computations is within seconds. The method admits extensions onto a much
larger class of FBPs then that discussed in the paper.

\section*{Acknowledgements}
Research was supported by CONACYT, Mexico via the project
222478. The first named author would like to express his gratitude to the excellence scholarship granted by the Mexican Government via the Ministry of Foreign Affairs which gave him the opportunity to develop this work during his stay in the CINVESTAV, Mexico.

% Computational Mathematics and Mathematical Physics
% Computational and Applied Mathematics


\begin{thebibliography}{10}
\itemsep=0pt
\small


\bibitem{camporesi2011generalization}
R.~Camporesi and A.~J. Di~Scala.
\newblock {A generalization of a theorem of Mammana}.
\newblock In {\em Colloquium Mathematicum}, volume 122-2, pages 215--223.
  Institute Matematyczny PAN, 2011.

\bibitem{CamposKravchenkoTorba_2011}
H.~M. Campos, V.~V. Kravchenko and S.~M. Torba.
\newblock Transmutations, {L}-bases and complete families of solutions of the
  stationary {Schr{\"o}dinger} equation in the plane.
\newblock {\em Journal of Mathematical Analysis and Applications}, 389(2):1222--1238, 2012.

\bibitem{colton1976solution}
D.~Colton.
\newblock {\em Solution of boundary value problems by the method of integral
  operators}.
\newblock Pitman London, 1976.

\bibitem{colton1984numerical}
D.~Colton and R.~Reemtsen.
\newblock {The numerical solution of the inverse Stefan problem in two space
  variables}.
\newblock {\em SIAM Journal on Applied Mathematics}, 44(5):996--1013, 1984.

\bibitem{COLTON1977}
D.~Colton and W.~Watzlawek.
\newblock Complete families of solutions to the heat equation and generalized
  heat equation in $\mathbb{R}^{n}$.
\newblock {\em Journal of Differential Equations}, 25(1):96--107, 1977.

\bibitem{crank1984free}
J.~Crank.
\newblock {\em Free and moving boundary problems}.
\newblock Clarendon press Oxford, 1984.

\bibitem{FASANO1979247}
A.~Fasano and M.~Primicerio.
\newblock Free boundary problems for nonlinear parabolic equations with
  nonlinear free boundary conditions.
\newblock {\em Journal of Mathematical Analysis and Applications}, 72(1):247--273, 1979.

\bibitem{Friedman1964}
A.~Friedman.
\newblock {\em Partial differential equations of parabolic type}.
\newblock Prentice-Hall, Inc., Englewood Cliffs, N.J., 1964.

\bibitem{GradshteynRyzhik07}
I.~S. Gradshteyn and I.~M. Ryzhik.
\newblock {\em Table of Integrals, Series, and Products, 7th ed.}
\newblock Academic Press, 2007.
\newblock Translated from Russian by Scripta Technica, Inc.

\bibitem{herreraporter2017}
A.~Herrera-Gomez and R.~M. Porter.
\newblock Mixed linear-nonlinear least squares regression.
\newblock {\em arXiv preprint arXiv:1703.04181}, 2017.

\bibitem{kharin2016analytical}
S.~N. Kharin, M.~M. Sarsengeldin, H.~Nouri, A.~Ashyralyev and A.~Lukashov.
\newblock {Analytical solution of two-phase spherical Stefan problem by heat
  polynomials and integral error functions}.
\newblock In {\em AIP Conference Proceedings}, volume 1759-1, 020031, 6 pages. AIP  Publishing, 2016.

\bibitem{khmelnytskaya2015eigenvalue}
K.~V. Khmelnytskaya, V.~V. Kravchenko and H.~C. Rosu.
\newblock Eigenvalue problems, spectral parameter power series, and modern
  applications.
\newblock {\em Mathematical Methods in the Applied Sciences},
  38(10):1945--1969, 2015.

\bibitem{khmelnytskaya2013wave}
K.~V. Khmelnytskaya, V.~V. Kravchenko, S.~M. Torba and S.~Tremblay.
\newblock Wave polynomials, transmutations and {Cauchy's} problem for the
  {Klein--Gordon} equation.
\newblock {\em Journal of Mathematical Analysis and Applications},
  399(1):191--212, 2013.

\bibitem{kravchenko2008representation}
V.~V. Kravchenko.
\newblock A representation for solutions of the {Sturm-Liouville} equation.
\newblock {\em Complex Variables and Elliptic Equations}, 53(8):775--789, 2008.

\bibitem{Kravchenko09}
V.~V. Kravchenko.
\newblock {\em Applied Pseudoanalytic Function Theory}.
\newblock Birkh\"auser Basel, 2009.

\bibitem{KravchenkoMorelosTorba14}
V.~V. Kravchenko, S.~Morelos and S.~M. Torba.
\newblock Liouville transformation, analytic approximation of transmutation
  operators and solution of spectral problems.
\newblock {\em Applied Mathematics and Computation}, 273:321--336, 2016.

\bibitem{kravchenko2015representation}
V.~V. Kravchenko, L.~J. Navarro and S.~M. Torba.
\newblock Representation of solutions to the one-dimensional {Schr{\"o}dinger}
  equation in terms of {Neumann} series of {Bessel} functions.
\newblock {\em arXiv preprint arXiv:1508.02738}, 2015.

\bibitem{KJT2017}
V.~V. Kravchenko, J.~A. Otero and S.~M. Torba.
\newblock Analytic approximation of solutions of parabolic partial differential   equations with variable coefficients.
\newblock {\em arXiv preprint arXiv:1706.06126}, 2017.

\bibitem{KravchenkoPorter10}
V.~V. Kravchenko and R.~M. Porter.
\newblock Spectral parameter power series for {Sturm-Liouville} problems.
\newblock {\em Mathematical Methods in the Applied Sciences}, 33(4):459--468,
  2010.

\bibitem{KravchenkoTorba2014Mod}
V.~V. Kravchenko and S.~M. Torba.
\newblock {Modified spectral parameter power series representations for
  solutions of {Sturm-Liouville} equations and their applications}.
\newblock {\em Applied Mathematics and Computation}, 238:82--105, 2014.

\bibitem{kravchenko2015analytic}
V.~V. Kravchenko and S.~M. Torba.
\newblock Analytic approximation of transmutation operators and applications to
  highly accurate solution of spectral problems.
\newblock {\em Journal of Computational and Applied Mathematics}, 275:1--26,
  2015.

\bibitem{KravchenkoTorba_Neumann_16}
V.~V. {Kravchenko} and S.~M. {Torba}.
\newblock {A Neumann series of Bessel functions representation for solutions of  Sturm-Liouville equations}.
\newblock {\em arXiv preprint arXiv:1612.08803}, 2016.

\bibitem{lawson1995solving}
C.~L. Lawson and R.~J. Hanson.
\newblock {\em Solving least squares problems}, volume~15 of {\em Classics in
  Applied Mathematics}.
\newblock Society for Industrial and Applied Mathematics (SIAM), Philadelphia,
  PA, 1995.
\newblock Revised reprint of the 1974 original.

\bibitem{MadsenNielsen2010}
K.~Madsen and H.~Nielsen.
\newblock {\em Introduction to Optimization and Data Fitting}.
\newblock 2008.

\bibitem{marchenko1952some}
V.~A. Marchenko.
\newblock {Some questions of the theory of one-dimensional linear differential
  operators of the second order. I}.
\newblock {\em Transactions of Moscow Mathematical Society}, 1:327--420, 1952.

\bibitem{meirmanov1992stefan}
A.~M. Meirmanov.
\newblock {\em The Stefan Problem}.
\newblock Walter de Gruyter, 1992.

\bibitem{miyazawa1989theory}
T.~Miyazawa.
\newblock Theory of the one-variable Fokker-Planck equation.
\newblock {\em Physical Review A}, 39(3):1447--1468, 1989.

\bibitem{wright2006numerical}
J.~Nocedal and S.~J. Wright.
\newblock {\em Numerical optimization}.
\newblock Springer, New York, 2006.

\bibitem{pecina1986function}
P.~Pecina.
\newblock On the function inverse to the exponential integral function.
\newblock {\em Bulletin of the Astronomical Institutes of Czechoslovakia},
  37:8--12, 1986.

\bibitem{ReemtsenKirsch1984}
R.~Reemtsen and A.~Kirsch.
\newblock {A method for the numerical solution of the one-dimensional inverse
  Stefan problem}.
\newblock {\em Numerische Mathematik}, 45(2):253--273, 1984.

\bibitem{Reemtsen1982}
R.~Reemtsen and C.~J. Lozano.
\newblock An approximation technique for the numerical solution of a Stefan
  problem.
\newblock {\em Numerische Mathematik}, 38(1):141--154, 1982.

\bibitem{rosenbloom1959expansions}
P.~C. Rosenbloom and D.~V. Widder.
\newblock Expansions in terms of heat polynomials and associated functions.
\newblock {\em Transactions of the American Mathematical Society},
  92(2):220--266, 1959.

\bibitem{ross1990nonlinear}
G.~J.~S. Ross.
\newblock {\em Nonlinear estimation}.
\newblock Springer series in statistics. Springer-Verlag, 1990.

\bibitem{rubinshteuin1971stefan}
L.~I. Rubinshtein.
\newblock {\em The {Stefan} problem}.
\newblock American Mathematical Soc. Translations of Mathematical Monographs,
  Vol. 27, 1971.

\bibitem{sarsengeldin2014analytical}
M.~M. Sarsengeldin, A.~Arynov, A.~Zhetibayeva and S.~Guvercin.
\newblock Analytical solutions of heat equation by heat polynomials.
\newblock {\em Bulletin of National Academy of Sciences of the Republic of
  Kazakhstan}, 5:21--27, 2014.

\bibitem{tarzia2000bibliography}
D.~A. Tarzia.
\newblock {A bibliography on moving-free boundary problems for the
  heat-diffusion equation}.
\newblock {\em The Stefan and related problems, MAT-Serie A}, 2, 2000.

\bibitem{widder1962}
D.~V. Widder.
\newblock Analytic solutions of the heat equation.
\newblock {\em Duke Math. J.}, 29(4):497--503, 1962.

\end{thebibliography}
\end{document}